\newcommand{\Z}{ \mathbb{Z}}
\newcommand{\R}{ \mathbb{R}}
\newcommand{\ran}{\operatorname{ran}}
\newcommand{\cran}{\overline{\operatorname{ran}}\, }
\newcommand{\norm}[1]{\left\| #1 \right\|}
\newcommand{\inner}[1]{\left< #1 \right>}
\newcommand{\N}{\mathbb{N}}
\newcommand{\h}{\mathcal{H}}
\newcommand{\K}{\mathcal{K}}
\newcommand{\minimatrix}[4]{\begin{pmatrix} #1 & #2 \\ #3 & #4 \end{pmatrix}  }
\newcommand{\twovector}[2]{\begin{pmatrix} #1\\#2 \end{pmatrix} }
\newcommand{\rank}{\operatorname{rank}}
\renewcommand{\phi}{\varphi}
\renewcommand{\S}{\mathcal{S}}
\newtheorem{Corollary}{Corollary}
\newtheorem{Theorem}{Theorem}
\newtheorem{Lemma}{Lemma}
\theoremstyle{definition}
\newtheorem*{Definition}{Definition}
\newtheorem{Example}{Example}
\newtheorem*{Question}{Question}
\begin{document}
    \title{Complex Symmetric Partial Isometries}

    \author{Stephan Ramon Garcia}
%    \author{James Tener}
    \address{   Department of Mathematics\\
            Pomona College\\
            Claremont, California\\
            91711 \\ USA}
    \email{Stephan.Garcia@pomona.edu}
    \urladdr{http://pages.pomona.edu/\textasciitilde sg064747}
    
    \author{Warren R. Wogen}
    \address{Department of Mathematics\\
	CB \#3250, Phillips Hall\\
	Chapel Hill, NC 27599}
    \email{wrw@email.unc.edu}
    \urladdr{http://www.math.unc.edu/Faculty/wrw}

    \keywords{Complex symmetric operator,  isometry, partial isometry.}
    \subjclass[2000]{47B99}
    
    \thanks{First author partially supported by National Science Foundation Grant DMS-0638789.}

    \begin{abstract}
    	An operator $T \in B(\h)$ is complex symmetric if there
	exists a conjugate-linear, isometric involution $C:\h\rightarrow\h$ 
	so that $T = CT^*C$.  We provide a concrete description of all complex symmetric partial isometries.
	In particular, we prove that any partial isometry on a Hilbert space of dimension $\leq 4$
	is complex symmetric.  
    \end{abstract}

\maketitle

\section{Introduction}

The aim of this note is to complete the classification of complex symmetric partial isometries
which was started in \cite{SNCSO}.  In particular, we give a concrete necessary and sufficient
condition for a partial isometry to be a complex symmetric operator.

Before proceeding any further, let us first recall a few definitions.  In the following, $\h$
denotes a separable, complex Hilbert space and $B(\h)$ denotes the collection of all
bounded linear operators on $\h$.

\begin{Definition}
	A \emph{conjugation} is a conjugate-linear operator $C:\h \rightarrow \h$, 
	which is both \emph{involutive} (i.e., $C^2 = I$) and \emph{isometric} (i.e., $\inner{Cx,Cy} = \inner{y,x}$).
\end{Definition}

\begin{Definition}
	We say that $T \in B(\h)$ is \emph{$C$-symmetric}
	if $T = CT^*C$.  We say that $T$ is \emph{complex symmetric} if there 
	exists a conjugation $C$ with respect to which $T$ is $C$-symmetric.
\end{Definition}

It is straightforward to show that if 
$\dim \ker T \neq \dim \ker T^*$, 
then $T$ is not a complex symmetric operator.
For instance, the unilateral shift is perhaps the most ubiquitous example
of a partial isometry which is not complex symmetric 
(see \cite[Prop.~1]{CSOA}, \cite[Ex.~2.14]{CCO}, \cite[Cor.~7]{MUCFO}).
On the other hand, we have the following theorem from \cite{SNCSO}:

	\begin{Theorem}\label{TheoremPartial}
		Let $T \in B(\h)$ be a partial isometry.\smallskip
		\begin{enumerate}\addtolength{\itemsep}{0.5\baselineskip}
			\item If $\dim \ker T = \dim \ker T^* = 1$, 
				then $T$ is a complex symmetric operator,
			
			\item If $\dim \ker T \neq \dim \ker T^*$, 
				then $T$ is not a complex symmetric operator.
				
			\item If $2 \leq \dim \ker T = \dim \ker T^* \leq \infty$, then
				either possibility can (and does) occur.
		\end{enumerate}
	\end{Theorem}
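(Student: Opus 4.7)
Part (ii) is immediate. If $T=CT^*C$ for a conjugation $C$, then $Tx=0$ iff $T^*(Cx)=0$, so $C$ restricts to a conjugate-linear isometric bijection $\ker T\to\ker T^*$; in particular $\dim\ker T=\dim\ker T^*$, whose contrapositive is (ii).

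For (i), I would pick unit vectors $e\in\ker T$ and $f\in\ker T^*$ and note that the rank-one perturbation $U:=T+fe^*$ extends the isometry $T|_{(\ker T)^\perp}$ by sending $e\mapsto f$; a short computation confirms that $U$ is unitary and $U^*f=e$. Writing $T=U-fe^*$ and using the identity $C(uv^*)C=(Cu)(Cv)^*$, valid for any conjugation, the equation $T=CT^*C$ reduces to the pair
\begin{equation*}
    CU^*C=U \qquad \text{and} \qquad Ce=f,
\end{equation*}
since then $Cf=CUe=U^*Ce=U^*f=e$ and the rank-one terms match. I would build such a $C$ from the spectral picture of $U$: on the cyclic subspace of $e$ under $U$, realize $(U,e)$ as $(M_z,1)$ on $L^2(\mu)$ for the scalar spectral measure $\mu$ on the unit circle, and take $C_1 g(z)=z\,\overline{g(z)}$. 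A direct check shows that $C_1$ is a conjugation on $L^2(\mu)$ with $C_1 M_z C_1=M_{\bar z}$ and $C_1(1)=z$; extending by any reference conjugation on the orthogonal complement of this cyclic subspace gives the required global $C$.

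For (iii), the affirmative direction is easy: $S_m\oplus S_n$ is a complex symmetric partial isometry with defect indices $(2,2)$, symmetrized by the direct sum of the conjugations that individually symmetrize the truncated shifts $S_m$ and $S_n$. For the negative direction, I would exhibit an explicit partial isometry on a small finite-dimensional space with defect indices $(2,2)$ that fails to be complex symmetric, and certify the failure by a unitary invariant---for instance, the rank or trace of a noncommutative polynomial $p(T,T^*)$ whose value changes when $T$ is replaced by $T^*$, or the observation that $T$ is not unitarily equivalent to its transpose in any orthonormal basis.

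The main obstacle is the negative example in (iii): at defect indices $(n,n)$ with $n\geq 2$ the rigidity that made the conjugation in (i) essentially canonical disappears, and one must both produce a concrete candidate operator and verify its non-complex-symmetry by an algebraic invariant---neither step is automatic, and both rely on explicit computation rather than structural reduction.
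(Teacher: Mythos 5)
The paper itself does not prove Theorem~\ref{TheoremPartial}; it is imported from \cite{SNCSO}, so there is no in-text argument to compare against and your proposal must stand on its own. Part (ii) is the standard argument and is correct: $CT=T^*C$ forces $C$ to carry $\ker T$ bijectively onto $\ker T^*$. Part (i) is essentially a complete and correct proof: $U=T+fe^*$ is indeed unitary with $U^*f=e$, the reduction of $T=CT^*C$ to the pair $CU^*C=U$, $Ce=f$ is valid (and $Cf=e$ follows as you say), and $C_1g=z\overline{g}$ on $L^2(\mu)$ does what you claim on the reducing subspace $\M$ generated by $e$. The one slip is the phrase ``any reference conjugation on the orthogonal complement'': an arbitrary conjugation $J$ on $\M^{\perp}$ will not satisfy $J(U|_{\M^{\perp}})J=(U|_{\M^{\perp}})^*$. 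You must invoke the fact that every unitary (being normal) is complex symmetric to obtain a conjugation on $\M^{\perp}$ that symmetrizes $U|_{\M^{\perp}}$; with that citation the argument closes.

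The genuine gap is the negative half of (iii), which you flag but do not fill: no candidate operator is written down and no invariant is computed, and this is exactly where the content of that clause lies. Note that the example cannot be as small as your phrasing suggests: by the paper's corollaries every partial isometry on a space of dimension $\leq 4$ is complex symmetric, so the first non-examples with defect indices $(2,2)$ occur in dimension $5$. Given the present paper's machinery, the cleanest completion uses only the $(\Rightarrow)$ implication of Theorem~\ref{TheoremMain} (proved via the Aluthge transform and Lemma~\ref{LemmaZero}): take a contraction $A$ on $\C^3$ that is not complex symmetric and satisfies $\rank(I-A^*A)=2$, for instance the nilpotent matrix with superdiagonal weights $\tfrac{1}{2}$ and $1$, let $B:\C^3\to\C^2$ satisfy $B^*B=I-A^*A$, and form $T=\minimatrix{A}{0}{B}{0}$ on $\C^5$; then $T$ is a partial isometry with $\dim\ker T=\dim\ker T^*=2$ whose compression to its initial space is not complex symmetric, hence $T$ is not. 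Replacing $A$ by $A\oplus 0$ (Lemma~\ref{LemmaZero} again) produces non-examples for every defect index $2\leq n\leq\infty$, which clause (iii) also demands and which your sketch does not address beyond $n=2$; the positive examples for general $n$ are direct sums of truncated shifts, or $S\oplus S^*$ in the infinite case. Finally, the non-complex-symmetry of the chosen $A$ must itself be verified (the trace/rank invariants you mention, or the known classification of $3\times 3$ nilpotents, will do), not merely asserted.
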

	
Although these results are the sharpest possible statements that can be made
given only the data $(\dim \ker T, \dim \ker T^*)$, they are in some sense unsatisfactory.
For instance, it is known that partial isometries on $\h$ that are not complex symmetric exist if $\dim \h \geq 5$
and that every partial isometry on $\h$ is complex symmetric if $\dim \h \leq 3$,
the authors were unable to answer the corresponding question if $\dim \h = 4$.  
To be more specific, the techniques used in \cite{SNCSO} were insufficient to resolve the question in the case
where $\dim \h = 4$ and $\dim \ker T = 2$.  
Significant numerical evidence in favor of the assertion that all partial isometries
on a four-dimensional Hilbert space are complex symmetric has recently been produced 
by J.~Tener \cite{Tener}.

Suppose that $T$ is a partial isometry on $\h$ and let 
\begin{equation}\label{eq-Initial}
	\h_1 = (\ker T)^{\perp} = \ran T^*
\end{equation}
denote the \emph{initial space} of $T$ and $\h_2 = (\h_1)^{\perp} = \ker T$ denote its
orthogonal complement (see \cite[Pr.~127]{Halmos} or \cite[Ch.~VIII, Sect.~3]{Conway} for terminology).  
With respect to the orthogonal decomposition $\h = \h_1 \oplus \h_2$, we have
\begin{equation}\label{eq-Standard}
	T = \minimatrix{A}{0}{B}{0}
\end{equation}
where $A:\h_1 \rightarrow \h_1$ and $B:\h_1\rightarrow \h_2$.
Furthermore, the fact that $T^*T$ is the orthogonal projection onto $\h_1$ yields the identity
\begin{equation}\label{eq-SOS}
	A^*A + B^*B = I,
\end{equation}
where $I$ denotes the identity operator on $\h_1$.
Finally, observe that the operator $A \in B(\h_1)$ is simply 
the compression of the partial isometry $T$ to its initial space.

The main result of this note is the following concrete description of complex symmetric partial isometries:

\begin{Theorem}\label{TheoremMain}
	Let $T \in B(\h)$ be a partial isometry.  If $A$ denotes the compression of 
	$T$ to its initial space, then $T$ 
	is a complex symmetric operator if and only if $A$ is a complex symmetric operator.
\end{Theorem}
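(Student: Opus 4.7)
The theorem has two directions. For the easier forward direction ($T$ complex symmetric implies $A$ complex symmetric), I would define $J := CV$ on $\h_1$, where $V := T|_{\h_1}:\h_1 \to \ran T$ is the canonical isometric embedding. Then $J$ is automatically conjugate-linear and isometric. To check $J^2 = I$, compute $J^2 x = CVCVx = CT(CVx) = T^*C(CVx) = T^*Vx = x$, using $CT = T^*C$ and the partial-isometry identity $T^*V = T^*T|_{\h_1} = I_{\h_1}$. Analogously, $JAx = C(VAx) = CT^2 x = T^*CTx = T^*(Jx) = A^*Jx$, where the final step uses $T^*(z, 0) = (A^*z, 0)$. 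This yields $JAJ = A^*$.

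For the backward direction, given a conjugation $J$ on $\h_1$ with $JAJ = A^*$, my plan is to construct $C$ in the block form
\[
    C(x, y) = (AJx + JB^* y,\ BJx + \gamma y),
\]
where $\gamma:\h_2 \to \h_2$ is a conjugate-linear operator chosen as follows. Write $B = V_B D_A$ with $D_A := (I - A^*A)^{1/2}$ and $V_B:\overline{\ran D_A} \to \overline{\ran B}$ the unitary part of the polar decomposition. Define $\gamma|_{\overline{\ran B}} := -V_B(JA)V_B^*$ and extend $\gamma$ to $\h_2 \ominus \overline{\ran B}$ by any conjugation. The ansatz for $C$ is motivated by the forward direction: the $(1,1)$ and $(2,1)$ blocks together realize $C|_{\h_1} = VJ$, so that $J = CV$ is recovered. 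The formula for $\gamma|_{\overline{\ran B}}$ is well-defined because $JA V_B^* u = A^* J V_B^* u \in A^*(\overline{\ran D_{A^*}}) \subseteq \overline{\ran D_A}$, using the standard identity $A^* D_{A^*} = D_A A^*$, so $V_B$ applies to its argument.

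The remaining verifications are bookkeeping. One checks $\gamma^* = \gamma$ (from $(JA)^* = A^*J = JA$), $\gamma^2 = D_{B^*}^2$ (from $(JA)^2 = A^*A$ together with $D_{B^*}^2|_{\overline{\ran B}} = V_B(A^*A)V_B^*$), $\gamma BJ = -BA^*$ (from $JAD_AJ = D_AA^*$), and $JB^*\gamma = -AB^*$ (from $D_{A^*}A = AD_A$). With these four identities in hand, $C^2 = I$ and $CTC = T^*$ follow by direct $2\times 2$ block multiplication, while isometry of $C$ reduces to the cancellation $\langle AJx, JB^*y\rangle + \langle BJx, \gamma y\rangle = 0$, itself a consequence of $\gamma BJ = -BA^*$. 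The main obstacle is isolating the correct formula for $\gamma$; once $\gamma|_{\overline{\ran B}} = -V_B(JA)V_B^*$ is identified, every remaining step reduces to the standard contraction identities $A^*A + B^*B = I$, $AA^* + D_{A^*}^2 = I$, $JB^*BJ = D_{A^*}^2$, and $JAJ = A^*$.
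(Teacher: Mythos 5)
Your proposal is correct, and the two directions compare differently with the paper's own proof. For the reverse implication your construction coincides with the paper's: since $JA = A^*J$, your block $-V_B(JA)V_B^*$ is literally the paper's $(2,2)$ entry $-VA^*KV^*$, and the four identities you isolate ($\gamma^* = \gamma$, $\gamma^2 = I - BB^*$, $\gamma BJ = -BA^*$, $JB^*\gamma = -AB^*$) are exactly what the paper's block computations establish. The only cosmetic differences are that the paper reduces to the case $\overline{\ran B} = \h_2$ via Lemma \ref{LemmaZero} instead of extending $\gamma$ by an arbitrary conjugation on $\h_2 \ominus \overline{\ran B}$, and that it deduces $C$-symmetry from the refined polar decomposition $T = CJ|T|$ of \cite{CSO2} rather than by multiplying out $CT = T^*C$. (One small understatement on your side: isometry of $C$ requires not only the cross-term cancellation but also $\gamma^*\gamma = I - BB^*$ on the second coordinate; this does follow from $\gamma^* = \gamma$ together with $\gamma^2 = D_{B^*}^2$, so nothing is actually missing.) The forward implication is where you genuinely diverge: the paper computes the Aluthge transform $\widetilde{T} = A \oplus 0$, invokes the theorem of \cite{ATCSO} that Aluthge transforms of complex symmetric operators are complex symmetric, and then applies Lemma \ref{LemmaZero}; you instead build the conjugation on $\h_1$ directly as $J = CV$ with $V = T|_{\h_1}$, which is more elementary and entirely self-contained. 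Your argument is sound; the one step you should make explicit is that $J$ maps $\h_1$ into $\h_1$, namely $CVx = CTx = T^*Cx \in \ran T^* = \h_1$, since this is what legitimizes writing $V(CVx) = T(CVx)$ in your computation of $J^2 = I$.
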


Due to its somewhat lengthy and computational proof, we defer the proof
of the preceding theorem until Section \ref{SectionProof}.
We remark that Theorem \ref{TheoremMain} remains true if one instead considers
the final space of $T$.  Indeed, simply apply the theorem with $T^*$ in place
of $T$ and then take adjoints.

\begin{Corollary}
	Every partial isometry of rank $\leq 2$ is complex symmetric.
\end{Corollary}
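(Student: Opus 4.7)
The plan is to apply Theorem \ref{TheoremMain}, which reduces the complex symmetry of $T$ to that of its compression $A$ to the initial space $\h_1 = (\ker T)^{\perp}$. The first observation I would make is a rank count: since $T$ acts isometrically on $\h_1$ and sends it onto $\ran T$, one has $\rank T = \dim \h_1$. Hence the hypothesis $\rank T \leq 2$ forces $\dim \h_1 \leq 2$, so that $A$ is an operator on a Hilbert space of dimension at most two.

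The remaining step is to invoke the classical fact that every operator on a Hilbert space of dimension at most two is complex symmetric. If one wished to verify this from scratch, the most economical route is Schur triangularization: one may assume $A = \minimatrix{\lambda_1}{b}{0}{\lambda_2}$, and then exhibit a witnessing conjugation explicitly. In the case $\lambda_1 = \lambda_2$, a direct computation shows that the flip conjugation $C(z_1,z_2) = (\overline{z_2},\overline{z_1})$ satisfies $CA^*C = A$. In the case $\lambda_1 \neq \lambda_2$, the matrix $A$ is diagonalizable, and one constructs a conjugation from a suitably normalized biorthogonal pair of eigenvectors. With $A$ thus known to be complex symmetric, Theorem \ref{TheoremMain} immediately delivers the conclusion for $T$.

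There is no real obstacle in this argument: Theorem \ref{TheoremMain} absorbs all the genuine content, and the $2 \times 2$ fact is classical with a short case-by-case verification. The only mildly careful point is the identification $\rank T = \dim \h_1$, but this is immediate from the partial isometry structure already recorded in (\ref{eq-Standard}) and (\ref{eq-SOS}).
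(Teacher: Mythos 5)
Your argument is correct and follows the same route as the paper: identify $\rank T = \dim \h_1$, observe that $A$ therefore acts on a space of dimension at most two, invoke the classical fact that every operator on a Hilbert space of dimension $\leq 2$ is complex symmetric, and conclude via Theorem \ref{TheoremMain}. The only (harmless) difference is that you treat ranks $0$, $1$, $2$ uniformly and sketch a proof of the two-dimensional fact, whereas the paper disposes of ranks $0$ and $1$ separately and simply cites the literature for the $2 \times 2$ case.
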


\begin{proof}
	Let $T \in B(\h)$ be a partial isometry such that $\rank T \leq 2$.  If $\rank T = 0$, then $T = 0$
	and there is nothing to prove.
	If $\rank T = 1$, then this is handled in \cite{SNCSO}.
	In the case $\rank T = 2$, we may write
	\begin{equation*}
		T = \minimatrix{A}{0}{B}{0}
	\end{equation*}
	where $A$ is an operator on a two-dimensional space.  Since every operator on a two-dimensional
	Hilbert space is complex symmetric (see \cite[Cor.~3]{Balayan}, \cite[Cor.~3.3]{Chevrot}, 
	\cite[Ex.~6]{CSOA}, \cite[Cor.~1]{SNCSO}, \cite[Cor.~3]{Tener}), 
	the desired conclusion follows from Theorem \ref{TheoremMain}. 
\end{proof}

\begin{Corollary}
	Every partial isometry on a Hilbert space of dimension $\leq 4$ is complex symmetric.
\end{Corollary}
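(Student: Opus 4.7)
The plan is to perform a direct case analysis on $r = \rank T$. Since $\dim \h \leq 4$ is finite, the rank--nullity theorem forces $\dim \ker T = \dim \h - r = \dim \ker T^*$, so the prohibition in Theorem \ref{TheoremPartial}(ii) is never triggered in this setting. It therefore suffices to verify complex symmetry separately for each possible value $r \in \{0, 1, 2, 3, 4\}$ (with $r \leq \dim \h$).

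The cases $r = 0$ (where $T = 0$) and $r \in \{1,2\}$ follow immediately from the preceding Corollary, which asserts that every partial isometry of rank at most $2$ is complex symmetric. If $r = 3$, then necessarily $\dim \h = 4$, so $\dim \ker T = \dim \ker T^* = 1$, and Theorem \ref{TheoremPartial}(i) applies directly. Finally, if $r = \dim \h$, then $\ker T = \{0\}$ and $T$ is a unitary operator on a finite-dimensional space; such an operator is normal, and by the spectral theorem it is unitarily equivalent to a diagonal operator, which is visibly $C$-symmetric with respect to the canonical entrywise conjugation on an orthonormal eigenbasis.

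The conceptual payoff is that the previously open case $\dim \h = 4$, $\dim \ker T = 2$, explicitly flagged in the introduction, falls under $r = 2$ and is now absorbed by the preceding Corollary: the compression $A$ of $T$ to its initial space acts on a two-dimensional Hilbert space and is therefore automatically complex symmetric, so Theorem \ref{TheoremMain} completes the argument. There is no substantive obstacle left to overcome; once Theorem \ref{TheoremMain} is in hand, this corollary is a short bookkeeping exercise on the dimensions of $\ker T$ and $\ker T^*$.
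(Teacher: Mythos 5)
Your proposal is correct and reduces the only genuinely new case ($\dim \h = 4$, $\dim \ker T = 2$) to Theorem \ref{TheoremMain} in exactly the same way the paper does; the remaining cases, which the paper simply delegates to the earlier results of \cite{SNCSO}, you verify directly via rank--nullity, Theorem \ref{TheoremPartial}(i), and the spectral theorem for finite-dimensional unitaries. The only cosmetic slip is the clause ``if $r = 3$ then necessarily $\dim \h = 4$'' (false when $\dim \h = 3$), but that subcase is a unitary and is absorbed by your final case $r = \dim \h$, so nothing is lost.
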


\begin{proof}
	As mentioned earlier, the results of \cite{SNCSO} indicate that 
	only the case $\dim \h = 4$ and $\dim \ker T = 2$ requires resolution.
	The corollary is now immediate consequence of Theorem \ref{TheoremMain} and the fact
	that every operator on a two-dimensional Hilbert space is complex symmetric.
\end{proof}

We conclude this section with the following theorem, which asserts that each $C$-symmetric
partial isometry can be extended to a $C$-symmetric unitary operator on the whole space (the 
significance lies in the fact that the corresponding conjugations for these two operators are the same).

\begin{Theorem}
	If $T$ is a $C$-symmetric partial isometry, then there exists
	a $C$-symmetric unitary operator $U$ and an orthogonal projection $P$ such that $T = UP$.
\end{Theorem}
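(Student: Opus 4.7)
The plan is to use the polar-decomposition-style identity $T = T(T^*T)$, where $P := T^*T$ is the orthogonal projection onto $\h_1 := (\ker T)^\perp$. So once we produce a $C$-symmetric unitary $U$ with $U|_{\h_1} = T|_{\h_1}$, the factorization $T = UP$ will follow automatically from $T|_{\ker T} = 0$. Since $T|_{\h_1}$ is already an isometric bijection onto $\ran T = (\ker T^*)^\perp$, the task reduces to choosing a unitary $W \colon \ker T \to \ker T^*$ so that the amalgamated operator
\[
	U(x_1 + x_2) := T x_1 + W x_2, \qquad x_1 \in \h_1,\; x_2 \in \ker T,
\]
satisfies $U = CU^*C$ with the \emph{same} conjugation $C$ that works for $T$.

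The first key observation is that $C$ interchanges $\ker T$ and $\ker T^*$: from $T = CT^*C$ one gets $T^*C = CT$, so $Tx = 0$ forces $T^*(Cx) = 0$. Since $C$ is an isometric, conjugate-linear involution, it preserves orthogonality, so $C|_{\ker T}$ is a conjugate-linear isometric bijection onto $\ker T^*$ and, passing to orthogonal complements, $C(\h_1) = \ran T$.

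Next I would pick any orthonormal basis $\{e_i\}$ of $\ker T$; then $\{Ce_i\}$ is an orthonormal basis of $\ker T^*$, and setting $W e_i := C e_i$ defines a unitary $W \colon \ker T \to \ker T^*$. The resulting $U$ is unitary because $\ran T \perp \ker T^*$. To verify $U = CU^*C$, I would split into two cases. For $x \in \h_1$, $Cx \in \ran T$ where $U^*$ agrees with $T^*$, so $CU^*Cx = CT^*Cx = Tx = Ux$, directly from the $C$-symmetry of $T$. For $x = \sum_i \alpha_i e_i \in \ker T$, the identity $W^*(Ce_i) = e_i$ together with the conjugate-linearity of $C$ gives
\[
	CU^*Cx \;=\; CW^*Cx \;=\; C\!\sum_i \overline{\alpha_i}\, e_i \;=\; \sum_i \alpha_i\, C e_i \;=\; Wx \;=\; Ux.
\]

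The only real subtlety is that an arbitrary unitary extension of $T$ need not be $C$-symmetric for the given $C$; the point is to tie $W$ directly to the action of $C$ on an orthonormal basis of $\ker T$, after which the symmetry identity for $U$ drops out of the computation. Everything else---unitarity of $U$ and the identity $T = UP$---is mechanical.
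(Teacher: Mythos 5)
Your proof is correct, but it reaches the conclusion by a more hands-on route than the paper. The paper invokes the refined polar decomposition of a $C$-symmetric operator from \cite[Sect.~2.2]{CSO2}, writing $T = CJP$ with $P = |T|$ an orthogonal projection and $J$ a partial conjugation supported on $\ran P$ commuting with $P$; it then extends $J$ to a conjugation $\widetilde{J} = J \oplus J'$ on all of $\h$ and sets $U = C\widetilde{J}$, which is automatically unitary and $C$-symmetric, with $T = UP$ falling out immediately. You instead build the unitary extension by hand: you observe that $C$ pairs $\ker T$ with $\ker T^*$, transport an orthonormal basis $\{e_i\}$ of $\ker T$ to the orthonormal basis $\{Ce_i\}$ of $\ker T^*$ to define $W$, and verify $U = CU^*C$ by a two-case computation. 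Your $W$ is in effect $C$ composed with the conjugation of $\ker T$ determined by $\{e_i\}$, so your $U$ is a particular instance of the paper's $C\widetilde{J}$; what your argument buys is self-containedness (no appeal to the structure theorem of \cite{CSO2}), at the cost of a longer verification. One small point to make explicit: you derive $C(\ker T) \subseteq \ker T^*$ from $CT = T^*C$, but the surjectivity you use (needed for $\{Ce_i\}$ to be a basis of $\ker T^*$ and for $C(\h_1) = \ran T$) requires the companion inclusion $C(\ker T^*) \subseteq \ker T$, which follows from the other identity $TC = CT^*$; this is immediate but should be stated. With that line added, the argument is complete.
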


\begin{proof}
	Since $T$ is a $C$-symmetric partial isometry, it follows that $|T| = P$ is an orthogonal projection
	and that $T = CJP$ where $J$ is a conjugation supported on $\ran P$ which commutes with $P$ 
	\cite[Sect.~2.2]{CSO2}.
	We may extend $J$ to a conjugation $\widetilde{J}$ on all of $\h$ by forming the internal direct sum
	$J \oplus J'$ where $J'$ is a partial conjugation supported on $\ker P$.  The operator $U = C\widetilde{J}$
	is a $C$-symmetric unitary operator.
\end{proof}

%%%%%%%%%%%%%%%%%%%%%%%%%%%%%%%%%%
\section{Partial isometries and the norm closure problem}

	Partial isometries on infinite-dimensional spaces often provide
	examples of note.  For instance, one can give
	a simple example of a partial isometry $T$ 
	satisfying $\dim \ker T = \dim \ker T^* = \infty$ which is
	not a complex symmetric operator:

	\begin{Example}\label{ExampleDirectSum}
		Let $S$ denote the unilateral shift on $l^2(\N)$,
		Although $S$ is certainly \emph{not} a complex symmetric operator
		(by (ii) of Theorem \ref{TheoremPartial}, see also \cite[Ex.~2.14]{CCO}, or \cite[Cor.~7]{MUCFO}), 
		part (i) of Theorem \ref{TheoremPartial} does ensure that the partial isometry 
		$S \oplus S^*$ \emph{is} complex symmetric.
		Indeed, simply take $N$ to be the bilateral shift on $l^2(\Z)$ and note that $S \oplus S^*$ is unitarily equivalent
		to $N - Ne_0 \otimes e_0$.  	
		That $S \oplus S^*$ is complex symmetric can also be verified by a 
		direct computation \cite[Ex.~5]{CSO2}.
		On the other hand, the partial isometry $T = S \oplus 0$ on $l^2(\N) \oplus l^2(\N)$
		is \emph{not} a complex symmetric operator by Lemma \ref{LemmaZero}.
	\end{Example}
	
	Let $\S(\h)$ denote the subset of $B(\h)$ consisting of all bounded 
	complex symmetric operators on $\h$.
	There are several ways to think about $\S(\h)$.  By definition, we have
	\begin{equation*}
		\S(\h) = \{ T \in B(\h) :  \text{$\exists$ a conjugation $C$ s.t.~$T = CT^*C$} \}.
	\end{equation*}
	If $C$ is a fixed conjugation on $\h$, then we also have
	\begin{equation*}
		\S(\h) = \{  UTU^* :  T = CT^*C,\,\, \text{$U$ unitary} \}.
	\end{equation*}
	Thus if we identify $\h$ with $l^2(\N)$ and $C$ denotes the canonical conjugation on $l^2(\N)$
	(i.e., entry-by-entry complex conjugation),
	we can think of $\S(\h)$ as being the \emph{unitary orbit} of the set of all bounded (infinite) complex symmetric matrices.

	The following example shows that the set $\S(\h)$ is not closed in the strong operator topology (SOT):

	\begin{Example}\label{ExampleLast}
		We maintain the notation of Example \ref{ExampleDirectSum}.
		For $n \in \N$, let $P_n$ denote the orthogonal projection onto
		the span of the basis vectors $\{e_i : i \geq n\}$ of $l^2(\N)$.
		Now observe that each operator $T_n = P_n S \oplus S^*$ is unitarily equivalent to
		$S \oplus 0_n \oplus S^*$ where $0_n$ denotes the zero operator on an $n$-dimensional Hilbert space.
		Each $T_n$ is complex symmetric since $S \oplus S^*$ is complex symmetric (by Lemma \ref{LemmaZero}).
		On the other hand, since $P_n S$ is SOT-convergent to $0$, it follows that
		the SOT-limit of the sequence $T_n$ is $0 \oplus S^*$, which is not a complex symmetric operator
		(by Lemma \ref{LemmaZero}).
	\end{Example}

	The preceding example demonstrates 
	that the set of all complex symmetric operators (on a fixed, infinite-dimensional Hilbert
	space $\h$) is not SOT-closed.  We also remark that 
	the conjugations corresponding to the operators $T_n$ from 
	Example \ref{ExampleLast} depend on $n$.
	In contrast, if we fix a conjugation $C$, then it is elementary to see that the set of $C$-symmetric operators is a 
	SOT-closed subspace of $B(\h)$.	

	We conclude with a related question, which we have been unable to resolve:

	\begin{Question}
		Is $\S(\h)$ norm closed?
	\end{Question}

%%%%%%%%%%%%%%%%%%%%%%%%%%%%%%%%%%
\section{Proof of Theorem \ref{TheoremMain}}\label{SectionProof}

	This entire section is devoted to the proof of Theorem \ref{TheoremMain}.
	We first require the following lemma:

	\begin{Lemma}\label{LemmaZero}
		If $\h,\K$ are separable complex Hilbert spaces, then
		$T \in B(\h)$ is a complex symmetric operator if and only if 
		$T \oplus 0 \in B(\h \oplus \K)$ is a complex symmetric operator.
	\end{Lemma}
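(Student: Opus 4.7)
The statement splits into two implications. The forward direction is immediate: if $T = CT^*C$ for a conjugation $C$ on $\mathcal{H}$, I choose any conjugation $J$ on $\mathcal{K}$ (one exists because $\mathcal{K}$ is separable) and observe that $\widetilde{C} = C \oplus J$ is a conjugation on $\mathcal{H} \oplus \mathcal{K}$ satisfying $\widetilde{C}(T \oplus 0)^* \widetilde{C} = T \oplus 0$.

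For the converse, suppose $\widetilde{C}$ is a conjugation on $\mathcal{H} \oplus \mathcal{K}$ with $T \oplus 0 = \widetilde{C}(T \oplus 0)^* \widetilde{C}$. The plan is to write
\[
\widetilde{C} = \minimatrix{A}{B}{D}{E}
\]
with $A, B, D, E$ conjugate-linear, and then to build the required conjugation on $\mathcal{H}$ out of the block $A$. Comparing blocks in the equivalent identities $(T \oplus 0)\widetilde{C} = \widetilde{C}(T \oplus 0)^*$ and $(T \oplus 0)^*\widetilde{C} = \widetilde{C}(T \oplus 0)$ (the second follows from the first by pre-multiplying by $\widetilde{C}$ and using $\widetilde{C}^2 = I$) will yield $TA = AT^*$, $T^*A = AT$, $DT = DT^* = 0$, and $TB = T^*B = 0$. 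Expanding $\widetilde{C}^2 = I$ produces $A^2 + BD = I_{\mathcal{H}}$ and $DA + ED = 0$, while isometry of $\widetilde{C}$ forces $\|Ax\|^2 + \|Dx\|^2 = \|x\|^2$ for every $x \in \mathcal{H}$.

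The key step is to set $M := \ker D = \{x \in \mathcal{H} : Dx = 0\}$, which is closed since $D$ is bounded. Because $DT = DT^* = 0$, both $\cran T$ and $\cran T^*$ lie in $M$, and therefore $M^\perp \subseteq \ker T \cap \ker T^*$. The relation $DA + ED = 0$ forces $A(M) \subseteq M$, and on $M$ the isometric and involutive identities collapse to $\|Ax\| = \|x\|$ and $A^2 x = x$; hence $A|_M$ is a conjugation on $M$. I would then define $C$ on $\mathcal{H} = M \oplus M^\perp$ by $C|_M = A|_M$ together with $C|_{M^\perp} = J'$, where $J'$ is any conjugation on the separable space $M^\perp$. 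To verify $CT^*C = T$ I would argue in two cases: on $M$, note that $T^*(Ax) \in M$ thanks to $DT^*A = 0$, so $CT^*Cx = AT^*Ax = TA^2 x = T(I - BD)x = Tx$, using $TB = 0$; on $M^\perp$, both $T$ and $T^*$ vanish and $C$ preserves $M^\perp \subseteq \ker T^*$, so $CT^*Cx = 0 = Tx$.

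The main obstacle is recognizing that $A$ itself is typically not a conjugation on $\mathcal{H}$, since its failures to be isometric and involutive are measured precisely by $D$ and $BD$. The crucial observation is that both defects are localized to $M^\perp$, a subspace on which $T$ acts trivially; this is exactly what allows one to patch $A|_M$ together with an arbitrary conjugation on $M^\perp$ to produce the desired conjugation $C$ on $\mathcal{H}$.
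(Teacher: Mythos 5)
Your proof is correct. The forward direction and the block-matrix computation for the converse coincide with the paper's argument: the paper also writes the conjugation as a $2\times 2$ conjugate-linear block matrix and extracts exactly your relations, namely $C_{11}T = T^*C_{11}$, $C_{21}T = C_{21}T^* = 0$, and $TC_{12} = T^*C_{12} = 0$. Where you diverge is in the treatment of the degenerate part of $\h$. The paper first reduces to the case $\h = \overline{\ran T + \ran T^*}$ by splitting off the reducing subspace on which $T$ and $T^*$ both vanish and absorbing it into $\K$; under that density hypothesis the vanishing relations force $C_{12} = C_{21} = 0$ outright, so the conjugation is block diagonal and $C_{11}$ is already a conjugation on all of $\h$. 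You skip the reduction and instead restrict to $M = \ker D$ (your $D$ is the paper's $C_{21}$), which contains $\overline{\ran T + \ran T^*}$, prove that $A = C_{11}$ restricts to a conjugation on $M$ using $\widetilde{C}^2 = I$ and the isometry identity, and patch with an arbitrary conjugation on $M^\perp \subseteq \ker T \cap \ker T^*$. The two routes cost about the same: the paper's reduction makes the final step immediate but requires relabeling the spaces, while yours keeps a single decomposition throughout at the price of verifying $CT^*C = T$ by hand on $M$ and $M^\perp$. One small point worth making explicit in your write-up: a norm-preserving conjugate-linear involution automatically satisfies $\inner{Cx,Cy} = \inner{y,x}$ by polarization, which is what justifies calling $A|_M$ a conjugation from the identities $\norm{Ax} = \norm{x}$ and $A^2x = x$ on $M$ alone (together with the surjectivity of $A|_M$, which follows from involutivity).
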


	\begin{proof}
		If $T$ is a $C$-symmetric operator on $\h$, then it is easily verified that
		$T \oplus 0$ is $(C \oplus J)$-symmetric on $\h \oplus \K$ for any conjugation $J$ on $\K$.
		The other direction is slightly more difficult to prove.
		
		Suppose that $S = T\oplus 0$ is a complex symmetric operator on $\h \oplus \K$.
		Before proceeding any further, let us remark that it suffices to consider the case where 
		\begin{equation}\label{eq-Density}
			\h = \overline{\ran T + \ran T^*}.
		\end{equation}
		Otherwise let $\h_1 = \overline{\ran T + \ran T^*}$ and note that $\h_1$ is a reducing subspace of $\h$.
		If $\h_2$ denotes the orthogonal complement of $\h_1$ in $\h$, then with respect
		to the orthogonal decomposition $\h_1 \oplus \h_2 \oplus \K$, 
		the operator $S$ has the
		form $T' \oplus 0 \oplus 0$, where $T'$ denotes the restriction of $T'$ to $\h_1$.
		By now considering $S$ with respect to the orthogonal decomposition 
		$\h \oplus \K = \h_1 \oplus (\h_2 \oplus \K)$, it follows that we need only consider the case 
		where \eqref{eq-Density} holds.
		
		Suppose now that \eqref{eq-Density} holds and that
		$S$ is $C$-symmetric where $C$ denotes a conjugation
		on $\h \oplus \K$.  Writing the
		equations $CS = S^*C$ and $CS^* = SC$ in terms of the 
		$2 \times 2$ block matrices
		\begin{equation}\label{eq-SC}
			S = \minimatrix{T}{0}{0}{0}, \qquad C = \minimatrix{C_{11}}{C_{12}}{C_{21}}{C_{22}}
		\end{equation}
		(the entries $C_{ij}$ of $C$ are conjugate-linear operators), we find that
		\begin{align}
			C_{11}T &= T^*C_{11} \label{eq-Vanish01},\\
			C_{21} T &= C_{21}T^* = 0, \label{eq-Vanish03}\\
			T^*C_{12} &= TC_{12} = 0\label{eq-Vanish04}.
		\end{align}

		Since $C_{21}T = C_{21}T^* = 0$, it follows that $C_{21}$ vanishes 
		on $\ran T + \ran T^*$ and hence on $\h$ itself by \eqref{eq-Density}.
		On the other hand, \eqref{eq-Vanish04} implies that $C_{12}$
		vanishes on the orthogonal complements of $\ker T$ and $\ker T^*$ in $\h$.
		By \eqref{eq-Density}, this implies that $C_{12}$ vanishes identically.
		
		It follows immediately from \eqref{eq-SC} that $C_{11}$ and $C_{22}$ must be
		conjugations on $\h$ and $\K$, respectively, whence $T$ is $C_{11}$-symmetric
		by \eqref{eq-Vanish01}.  This concludes the proof of the lemma.
	\end{proof}

	Now let us suppose that $T$ is a partial isometry on $\h$ and let 
	\begin{equation*}
		\h_1 = (\ker T)^{\perp} = \ran T^*.
	\end{equation*}
	and $\h_2 = \ker T$.
	With respect to the decomposition $\h = \h_1 \oplus \h_2$, it follows that
	\begin{equation*}
		T = \minimatrix{A}{0}{B}{0}
	\end{equation*}
	where $A:\h_1 \rightarrow \h_1$, $B:\h_1\rightarrow \h_2$, and
	\begin{equation}\label{eq-SOS}
		A^*A + B^*B = I.
	\end{equation}
	
	\noindent $(\Rightarrow)$
	Suppose that $T$ is a complex symmetric operator.
	For an operator with polar decomposition $T = U|T|$ (i.e., $U$ is the unique partial isometry
	satisfying $\ker U = \ker T$ and $|T|$ denotes the positive operator $\sqrt{T^*T}$), the 
	\emph{Aluthge transform} of $T$ is defined to be the operator $\widetilde{T} = |T|^{ \frac{1}{2} } U |T|^{ \frac{1}{2} }$.
	Noting that 
	\begin{equation*}
		T^*T = \minimatrix{I}{0}{0}{0},
	\end{equation*}
	we find that 
	\begin{equation*}
		\widetilde{T} = \minimatrix{A}{0}{0}{0}.
	\end{equation*}
	By \cite[Thm.~1]{ATCSO}, we know that the Aluthge transform of a complex symmetric operator
	is complex symmetric.  Applying Lemma \ref{LemmaZero} to $\widetilde{T}$, we conclude that $A$ 
	is complex symmetric, as desired.
	\medskip

	\noindent $(\Leftarrow)$
	Let us now consider the more difficult implication of Theorem \ref{TheoremMain}, 
	namely that if $A$ is a complex symmetric operator, then $T$ is as well.
	We claim that it suffices to consider the case where $\cran B = \h_2$.  
	In other words, we argue that if 
	\begin{equation*}
		\K = \overline{ \ran T + \ran T^*} ,
	\end{equation*}
	then we may suppose that $\K = \h$.  Indeed, $\K$ is a reducing subspace for $T$ 
	and $T = 0$ on $\K^{\perp}$.  By Lemma \ref{LemmaZero}, if $T|_{\K}$ is a complex
	symmetric operator, then so is $T$.	
%	In other words, we argue that we may suppose that
%	Indeed, otherwise the closure of $\ran T + \ran T^*$ would be a proper subspace
%	of $\h$ and $T$ would have a $0$ direct summand since
%	\begin{equation*}
%		(\ran T + \ran T^*)^{\perp} = \ker T \cap \ker T^*.
%	\end{equation*}
%	By Lemma \ref{LemmaZero}, it follows that we need only consider the 
%	case where $\cran B = \h_2$.  

	Write $B = V|B|$ where $V:\h_1 \rightarrow \h_2$ is a partial isometry with initial space
	$(\ker B)^{\perp} \subseteq \h_1$ and final space $\h_2$ (since $\cran B = \h_2$).
	In particular, we have the relations
	\begin{equation}\label{eq-W1}
		V^*B = |B| = B^*V, \qquad |B| = \sqrt{I - A^*A}.
	\end{equation}
	By hypothesis, the operator $A \in B(\h_1)$ is complex symmetric.
	Therefore suppose that $K$ is a conjugation on $\h_1$ such that $KA = A^*K$ and 
	observe that the equations
	\begin{align*}
		A \sqrt{I - A^*A} &= \sqrt{I -AA^*}A,\\
		A^* \sqrt{I - AA^*} &= \sqrt{I - A^*A} A^*,\\
		K \sqrt{I - A^*A} &= \sqrt{I - AA^*}K,\\
		K \sqrt{I - AA^*} &= \sqrt{I - A^*A}K,
	\end{align*}
	follow from a standard polynomial approximation argument (i.e., if $p(x) \in \R[x]$,
	then $Ap(A^*A) = p(AA^*)A$ and $Kp(A^*A) = p(AA^*)K$ hold whence the desired identities 
	follow upon passage to the strong operator limit).
	In particular, it follows from the preceding that
	\begin{equation*}
		(KA)\sqrt{I - A^*A} = \sqrt{I - A^*A}(KA),
	\end{equation*}
	that is
	\begin{equation}\label{eq-W2}
		KA|B| = |B|KA, \qquad
		A^*K|B| = |B|A^*K.
	\end{equation}

	Let us now define a conjugate-linear operator $C$ on $\h$
	by the formula
	\begin{equation}\label{eq-C}
		C = \minimatrix{AK}{KB^*}{BK}{-VA^*KV^*}.
	\end{equation}
	Assuming for the moment that $C$ is a conjugation on $\h$,
	we observe that
	\begin{equation*}
		\underbrace{ \minimatrix{A}{0}{ B }{0}	}_T
		=
		\underbrace{ \minimatrix{AK}{KB^*}{BK}{-VA^*KV^*}  }_C
		\underbrace{ \minimatrix{K}{0}{0}{0} }_{J}
		\underbrace{ \minimatrix{I}{0}{0}{0} }_{|T|}.
	\end{equation*}
	Since it is clear that $J$ is a partial conjugation which is supported on the range of $|T|$ and which commutes with $|T|$,
	it follows immediately that $T$ is a $C$-symmetric operator (see \cite[Thm.~2]{CSO2}).

	To complete the proof of Theorem \ref{TheoremMain}, we must therefore show that $C$ is a conjugation on $\h$.
	In other words, we must check that $C^2$ is the identity operator on $\h$ and that $C$ is isometric.
	Since these computations are somewhat lengthy, we perform them separately:\medskip

	\noindent\textbf{Claim}:  $C^2 = I$.

	\begin{proof}[Pf.~of Claim]
		We first expand out $C^2$ as a $2 \times 2$ block matrix:
		\begin{align*}	
			C^2
			&=\minimatrix{AK}{KB^*}{BK}{-VA^*KV^*}\minimatrix{AK}{KB^*}{BK}{-VA^*KV^*}\\
			&= \minimatrix{AKAK + KB^*BK}{AKKB^*-KB^*VA^*KV^*}{BKAK-VA^*KV^*BK}{BKKB^*+VA^*KV^*VA^*KV^*}\\		
			&= \minimatrix{AA^* + KB^*BK}{AB^*-KB^*VA^*KV^*}{BA^*-VA^*KV^*BK}{BB^*+VA^*KV^*VA^*KV^*}.	
		\end{align*}
		To obtain the preceding line, we used the fact that $K$ is a conjugation and $A$ is $K$-symmetric.
		Letting $E_{ij}$ denote the entries of the preceding block matrix we find that
		\begin{align*}
			E_{11} 
			&= AA^* + KB^*BK &&\\
			&= AA^* + K(I - A^*A)K &&\\
			&= AA^* + (I - AA^*) &&\\
			&= I.&&\\
			&&&\\
			E_{12}
			&= AB^*-KB^*VA^*KV^*&&\\
			&= AB^* - K|B|A^*KV^*   &&\text{by \eqref{eq-W1}}\\
			&= AB^* - KA^*K|B|V^* &&\text{by \eqref{eq-W2}}\\
			&= AB^* - A|B|V^* &&\\
			&= AB^* - AB^* &&\text{since $B^* = |B|V$}\\
			&= 0.&&\\
			&&&\\
			E_{21}
			&= BA^*-VA^*KV^*BK &&\\
			&= BA^* - VA^*K|B|K &&\text{since $V^*B = |B|$}\\
			&= BA^* - V|B|A^*KK && \text{by \eqref{eq-W2}}\\
			&= BA^* - V|B|A^* && \\
			&= BA^* - BA^* && \text{since $B = V|B|$}\\
			&= 0.&&
		\end{align*}
		As for $E_{22}$, it suffices to show that $E_{22}$ agrees with $I$ (the identity operator on $\h_2$) on the
		range of $B$, which is dense in $\h_2$.  In other words, we wish to show that
		$E_{22}Bx = Bx$ for all $x \in \h_2$, which is equivalent to showing that
		\begin{equation}\label{eq-Mess}
			E_{22}Bx = BB^*Bx+VA^*KV^*VA^*KV^*Bx = Bx
		\end{equation}
		for all $x \in \h_2$.
		Let us investigate the second term of \eqref{eq-Mess}:
		\begin{align*}
			VA^*KV^*VA^*KV^*Bx
			&= VA^*KV^*VA^*K|B|x &&\text{by \eqref{eq-W1}}\\
			&= VA^*KV^*V|B|A^*Kx &&\text{by \eqref{eq-W2}}\\
			&= VA^*K|B|A^*Kx &&\text{since $V^*V = P_{\cran|B|}$}\\
			&= V|B|A^*KA^*Kx &&\text{by \eqref{eq-W2}}\\
			&= BA^*KA^*Kx &&\text{since $B = V|B|$}\\
			&= BA^*Ax && \\
			&= B(I - B^*B)x &&\text{since $A^*A+B^*B = I$}\\
			&= Bx - BB^*Bx. &&
		\end{align*}
		Putting this together with \eqref{eq-Mess}, we find that $E_{22}Bx = Bx$ for all $x \in \h_2$
		whence $E_{22} = I$, as claimed.
	\end{proof}

	\noindent\textbf{Claim}:  $C$ is isometric.

	\begin{proof}[Pf.~of Claim]
		The proof requires three steps:\smallskip
		\begin{enumerate}\addtolength{\itemsep}{0.5\baselineskip}
			\item Show that $C$ is isometric on $\h_1$,
			\item Show that $C$ is isometric on $B\h_1$, which is dense in $\h_2$,
			\item Show that $C\h_1 \perp C(B\h_1)$.
		\end{enumerate}
		\smallskip
		For the first portion, observe that
		\begin{align*}
			\norm{ C \twovector{x}{0} }^2
			&= \norm{ \minimatrix{AK}{KB^*}{BK}{-VA^*KV^*}  \twovector{x}{0} }^2\\
			&= \norm{ \twovector{AKx}{BKx} }^2\\
			&= \inner{AKx,AKx} + \inner{BKx,BKx}\\
			&= \inner{A^*AKx,Kx} + \inner{B^*BKx,Kx}\\
			&= \inner{(A^*A + B^*B)Kx,Kx}\\
			&= \inner{Kx,Kx}\\
			&= \norm{Kx}^2\\
			&= \norm{x}^2.
		\end{align*}
		Thus (i) holds.
		\medskip

		\noindent Now for (ii):
		\begin{align*}
			\norm{ C \twovector{0}{Bx} }^2
			&= \norm{ \minimatrix{AK}{KB^*}{BK}{-VA^*KV^*}  \twovector{0}{Bx} }^2\\
			&= \norm{  \twovector{KB^*Bx}{-VA^*KV^*Bx} }^2\\
			&= \norm{KB^*Bx}^2 + \norm{VA^*KV^*Bx}^2\\
			&= \norm{B^*Bx}^2 + \norm{VA^*K|B|x}^2\\
			&= \norm{B^*Bx}^2 + \norm{V|B|A^*Kx}^2\\
			&= \norm{B^*Bx}^2 + \norm{BA^*Kx}^2\\
			&= \norm{B^*Bx}^2 + \inner{BA^*Kx,BA^*Kx}\\
			&= \norm{B^*Bx}^2  + \inner{B^*BA^*Kx,A^*Kx}\\
			&= \norm{B^*Bx}^2  + \inner{(I - A^*A)A^*Kx,A^*Kx}\\
			&= \norm{B^*Bx}^2  + \inner{A^*K(I - A^*A)x,A^*Kx}\\
			&= \norm{B^*Bx}^2  + \inner{K(I - A^*A)x,AA^*Kx}\\
			&= \inner{B^*Bx,B^*Bx}  + \inner{KAA^*Kx,(I - A^*A)x}\\
			&= \inner{(I - A^*A)x,(I - A^*A)x} + \inner{A^*Ax,(I - A^*A)x}\\
			&= \inner{x,(I - A^*A)x}  - \inner{A^*Ax,(I - A^*A)x}+ \inner{A^*Ax,(I - A^*A)x}\\
			&= \inner{x,(I - A^*A)x}  \\
			&= \inner{x,B^*Bx}  \\
			&= \inner{Bx,Bx}  \\
			&= \norm{Bx}^2.
		\end{align*}
		Thus (ii) holds.
		\medskip

		\noindent Now for (iii):
		\begin{align*}
			\inner{ C \twovector{x}{0}, C \twovector{0}{By} }
			&= \inner{\minimatrix{AK}{KB^*}{BK}{-VA^*KV^*}\twovector{x}{0}, 
				\minimatrix{AK}{KB^*}{BK}{-VA^*KV^*} \twovector{0}{By} }\\
			&= \inner{ \twovector{AKx}{BKx}, \twovector{KB^*By}{-VA^*KV^*By} }\\
			&= \inner{ AKx, KB^*By} - \inner{BKx, VA^*KV^*By}\\
			&= \inner{B^*By, KAKx } - \inner{BKx, VA^*K|B|y}\\
			&= \inner{B^*By, A^*x } - \inner{BKx, V|B|A^*Ky}\\
			&= \inner{AB^*By, x } - \inner{BKx, BA^*Ky}\\
			&= \inner{AB^*By, x } - \inner{B^*BKx, A^*Ky}\\
			&= \inner{AB^*By, x } - \inner{(I - A^*A)Kx, A^*Ky}\\
			&= \inner{AB^*By, x } - \inner{K(I - AA^*)x, A^*Ky}\\
			&= \inner{AB^*By, x } - \inner{KA^*Ky, (I - AA^*)x}\\
			&= \inner{AB^*By, x } - \inner{Ay, (I - AA^*)x}\\
			&= \inner{AB^*By, x } - \inner{(I - AA^*)Ay, x}\\
			&= \inner{AB^*By, x } - \inner{A(I - A^*A)y, x}\\
			&= \inner{AB^*By, x } - \inner{AB^*By, x}\\
			&=0.
		\end{align*}
		By the polarization identity, it follows that
		\begin{equation*}
			\inner{ C\twovector{x_1}{Bx_2}, C \twovector{y_1}{By_2} }
			= \inner{ \twovector{x_2}{By_2}, \twovector{x_1}{By_1} }
		\end{equation*}
		holds for all $x_1,x_2,y_1,y_2\in \h_1$ whence $C$ is isometric on $\h$.
	\end{proof}


\begin{thebibliography}{99}

\bibitem{Balayan}
	Balayan, L., Garcia, S.R.,
	\emph{Unitary equivalence to a complex symmetric matrix: geometric criteria},
	(preprint).

\bibitem{Conway}
	Conway, J.B.,
	\textit{A Course in Functional Analysis} (second edition),
	Graduate Texts in Mathematics, {\bf 96}, Springer-Verlag, New York, 1990. 

\bibitem{Chevrot}
  Chevrot, N., Fricain, E., Timotin, D.,
  \textit{The characteristic function of a complex symmetric contraction},
Proc. Amer. Math. Soc. {\bf 135}~(2007), 2877--2886. 
MR2317964 (2008c:47025)
  
\bibitem{GarciaAAEPRI}
	Garcia, S.R.,
	\textit{Approximate antilinear eigenvalue problems and related inequalities},
	Proc. Amer. Math. Soc. {\bf 136}~(2008), no. 1, 171--179 .
	MR2350402

\bibitem{ATCSO}
	Garcia, S.R.,
	\textit{Aluthge transforms of complex symmetric operators},
	Integral Equations Operator Theory  {\bf 60}~(2008),  no. 3, 357--367.
	MR2392831 

\bibitem{MUCFO}
	Garcia, S.R.,
	\textit{Means of unitaries, conjugations, and the Friedrichs operator},
	J. Math. Anal. Appl. {\bf 335}~(2007), 941--947.
	MR2345511 (2008i:47070) 
	

\bibitem{CSOA}
  Garcia, S.R., Putinar, M.,
  \textit{Complex symmetric operators and applications},
  Trans. Amer. Math. Soc. {\bf 358}~(2006), 1285-1315.
  MR2187654 (2006j:47036) 

\bibitem{CSO2}
  Garcia, S.R., Putinar, M.,
  \textit{Complex symmetric operators and applications II},
  Trans. Amer. Math. Soc. {\bf 359}~(2007), 3913-3931. 
  MR2302518 (2008b:47005)

\bibitem{CCO}
  Garcia, S.R., 
  \textit{Conjugation and Clark Operators},
  Contemp. Math. {\bf 393}~(2006), 67-112.
  MR2198373 (2007b:47073)

\bibitem{SNCSO}
	Garcia, S.R., Wogen, W.R.,
	\textit{Some new classes of complex symmetric operators},
	Trans. Amer. Math. Soc. (to appear).

\bibitem{Gilbreath}
	Gilbreath, T.M., Wogen, W.R.,
	\textit{Remarks on the structure of complex symmetric operators},
	Integral Equations Operator Theory {\bf 59}~(2007), no. 4, 585--590.
	MR2370050

\bibitem{Halmos}
  Halmos, P.R.,
  \textit{A Hilbert Space Problem Book} (Second Edition),
  Springer-Verlag, New York, 1982.

\bibitem{Tener}
	Tener, J.E.,
	\text{Unitary equivalence to a complex symmetric matrix: an algorithm},
	J. Math. Anal. Appl. {\bf 341}~(2008), no. 1, 640--648. 
	MR2394112 (2008m:15062) 
	
\bibitem{Sarason}
    Sarason, D.,
    \textit{Algebraic properties of truncated Toeplitz operators},
    Oper. Matrices  {\bf 1}~(2007),  no. 4, 491--526. 
    MR2363975 (2008i:47060) 
    
\end{thebibliography}
\end{document}